\documentclass[11pt,a4paper]{article}
\usepackage{a4,amssymb,amsmath,amsthm,url,color,enumerate}
\usepackage{bezier,amsfonts,amssymb,graphicx,amsthm,url}
\usepackage[english]{babel}
\title{Hamiltonicity and $\sigma$-hypergraphs}
\date{}
\begin{document}
\newtheorem{theorem}{Theorem}[section]
\newtheorem{definition}{Definition}[section]
\newtheorem{proposition}[theorem]{Proposition}
\newtheorem{corollary}[theorem]{Corollary}
\newtheorem{lemma}[theorem]{Lemma}
\newtheorem{problem}[theorem]{Problem}

\author{Christina Zarb \\Department of Mathematics \\University of Malta \\Malta }
\DeclareGraphicsExtensions{.pdf,.png,.jpg}

\maketitle

\begin{abstract}
We define and study a special type of hypergraph.  A $\sigma$-hypergraph $H= H(n,r,q$ $\mid$ $\sigma$), where $\sigma$ is a partition of $r$,  is an $r$-uniform hypergraph having $nq$ vertices partitioned into $ n$ classes of $q$ vertices each.  If the classes are denoted by $V_1$, $V_2$,...,$V_n$, then a subset $K$ of $V(H)$ of size $r$ is an edge if the partition of $r$ formed by the non-zero cardinalities $ \mid$ $K$ $\cap$ $V_i \mid$, $ 1 \leq i \leq n$, is $\sigma$. The non-empty intersections $K$ $\cap$ $V_i$ are called the parts of $K$, and $s(\sigma)$ denotes the number of parts.  We consider various types of cycles in hypergraphs such as Berge cycles and sharp cycles in which only consecutive edges have a nonempty intersection.  We show that most $\sigma$-hypergraphs contain a Hamiltonian Berge cycle and that, for $n \geq s+1$ and $q \geq r(r-1)$, a $\sigma$-hypergraph $H$ always contains a sharp Hamiltonian cycle.  We also extend this result to $k$-intersecting cycles.
\end{abstract}

\section{Introduction}

Let $V=\{v_1,v_2,...,v_n\}$ be a finite set, and let $E=\{E_1,E_2,...,E_m\}$ be a family of subsets of $X$.  The pair $H=(X,E)$ is called a \emph{hypergraph} with vertex-set $V(H)=V$, and with edge-set $E(H)=E$.  When all the subsets are of the same size $r$, we say that $H$ is an \emph{r-uniform hypergraph}.   A $\sigma$-hypergraph $H= H(n,r,q$ $\mid$ $\sigma$), where $\sigma$ is a partition of $r$,  is an $r$-uniform hypergraph having $nq$ vertices partitioned into $ n$ \emph{classes} of $q$ vertices each.  If the classes are denoted by $V_1$, $V_2$,...,$V_n$, then a subset $K$ of $V(H)$ of size $r$ is an edge if the partition of $r$ formed by the non-zero cardinalities $ \mid$ $K$ $\cap$ $V_i$ $\mid$, $ 1 \leq i \leq n$, is $\sigma$. The non-empty intersections $K$ $\cap$ $V_i$ are called the parts of $K$, and $s=s(\sigma)$ denotes the number of parts.  We denote the largest part of $\sigma$ by $\Delta=\Delta(\sigma)$ and the smallest part by $\delta=\delta(\sigma)$.  In order to avoid trivial situations where there are no edges, we shall always assume that $q \geq \Delta$ and $n \geq s$. These hypergraphs were first introduced in \cite{CaroLauri14} and studied further in \cite{CLz1,CLZ3}.

We consider Hamiltonian cycles in $\sigma$-hypergraphs.  In a graph $G$, a Hamiltonian path is a path which includes  every vertex $v \in V(G)$.  A Hamiltonian cycle is a closed Hamiltonian path.  It is well-known that the problem of determining whether a Hamiltonian  cycle exists in a graph is $NP$-complete. An excellent survey on results related to Hamiltonicity is given in \cite{gould2014}.

In hypergraphs, in particular $r$-uniform hypergraphs, there are several different types of paths and cycles to consider --- amongst the first to be defined was the \emph{Berge cycle} \cite{berge1984hypergraphs}.  A sequence $C=(v_1,e_1,v_2,e_2,\ldots,v_p,e_p,v_1)$  is a \emph{Berge cycle} if 
\begin{itemize}
\item{$v_1,v_2,\ldots,v_p$ are all distinct vertices}
 \item{$e_1,e_2,\ldots, e_p$ are all distinct edges}
\item{$v_k,v_{k+1} \in e_k$ for $k=1,\ldots,m$ where $v_{m+1}=v_1$}
\end{itemize} 

\noindent A Berge cycle is Hamiltonian if it covers all the vertices in the hypergraph.

Several other types of cycles and Hamiltonian cycles have been described and studied as in \cite{katona1999hamiltonian,2014arXiv1402.4268K,LeMatematiche114}.  The presentation \cite{katonacycles} gives an excellent survey of cycles and paths in hypergraphs.  We give the following definitions of cycles and Hamiltonian cycles which are particularly suited to the structure of $\sigma$-hypergraphs.

Consider an $r$-uniform hypergraph $H$.  Let $C=(e_1,\ldots,e_p)$ be a sequence of edges of $H$.  Then $C$ is a \emph{sharp cycle} if  $|e_i \cap e_{i+1}|>0$ for $1 \leq i \leq p$, where addition is modulo $p$, and $|e_i \cap e_j|=0$ otherwise.

A sharp cycle $C$ is a \emph{ sharp Hamiltonian cycle} if $V(C)=V(H)$.

A sharp Hamiltonian cycle $C=(e_1,e_2,\ldots,e_p)$  is said to be \emph{$(t,z)$-sharp} if $p=0\pmod 2$ and, for some $t,z > 0$,  $|e_i \cap e_{i+1}|=t$ when $i=1 \pmod 2$ and $|e_i \cap e_{i+1}|=z$ when $i=0\pmod 2$, for $1 \leq i \leq p$ and addition is modulo $p$.  If $t=z$, the cycle is \emph{$t$-sharp}.  These cycles are analogous to $t$-overlapping cycles as described in \cite{rucinski2013hamilton}.  A $1$-sharp cycle is often referred in the literature to as a \emph{loose cycle}.

Finally, a \emph{$k$-intersecting cycle} $C=(e_1,e_2,\ldots,e_p)$ is such that \[|e_i \cap e_{i+1} \cap \ldots \cap e_{i+k-1}| >0\] for $1 \leq i \leq p$, where addition is modulo $p$, while any other collection of $k$ or more edges has an empty intersection.  If $V(C)=V(H)$ then $C$ is a \emph{$k$-intersecting Hamiltonian cycle}.  Thus a sharp Hamiltonian cycle is a $2$-intersecting Hamiltonian cycle.

In this paper we consider all the above types of Hamiltonian cycles in $\sigma$-hypergraphs.  We first consider Berge cycles, and then move on to sharp Hamiltonian cycles, and finally to $k$-intersecting cycles.  We give some conditions for the existence and non-existence of the different types of Hamiltonian cycles in $\sigma$-hypergraphs, which then lead us to consider conditions on the parameters of $H=H(n,r,q \mid \sigma)$ for which the different types of Hamiltonian cycles always exist.

When constructing sharp Hamiltonian cycles,  we will use matchings --- the link between matchings and Hamiltonian cycles in $r$-uniform hypergraphs has been extensively studied \cite{berge1984hypergraphs}.  Given an $r$-uniform hypergraph $H$, a \emph{matching} is a set of pairwise vertex-disjoint edges $M \subset E(H)$.  A \emph{perfect matching} is a matching which covers all vertices of $H$ and we denote the size of the largest matching in an $r$-uniform hypergraph $H$ by $\nu(H)$.  

Matchings in $\sigma$-hypergraphs were studied in \cite{CLZ3} and, as in that paper we here give more structure to the vertices of the hypergraph $H=H(n,r,q \mid \sigma)$ with $\sigma=(a_1,a_2,\ldots,a_s)$, and $\Delta=a_1 \geq a_2 \geq \ldots \geq a_s=\delta$.   The classes making up the vertex set are ordered as $V_1,V_2,\ldots,V_n$ and, within each $V_i$, the vertices are ordered as $v_{1,i},v_{2,i},\ldots, v_{q,i}$.  We visualise the vertex set $V(H)$ as  a $q \times n$ grid whose first row is $v_{1,1},v_{1,2},\ldots,v_{1,n}$.  We sometimes refer to the vertices $v_{1,i},v_{2,i},\ldots, v_{k,i}$ as the top $k$ vertices of the class $V_i$, and to $v_{q-k+1,i},v_{q-k+2,i},\ldots, v_{q,i}$ as the bottom $k$ vertices of $V_i$.  The vertices $v_{k,i}$ and $v_{k+1,i}$ are said to be consecutive in $V_i$.  The class $V_1$ is called the first class of vertices, and $V_n$ is the last class; $V_i$ and $V_{i+1}$ are said to be consecutive classes.  A set of vertices contained in $h$ consecutive rows and $k$ consecutive classes of $V(H)$ is said to be an $h \times k$ subgrid of $V(H)$.  Also, for $\sigma=(a_1,a_2,\ldots,a_s)$, if $a_1=a_2=\ldots=a_s=\Delta$, $\sigma$ is said to be a \emph{rectangular} partition.  Furthermore, if $\sigma$ is rectangular and $\Delta=s(\sigma)$, then $\sigma$ is a \emph{square} partition.

We observe that it is well known that in a graph, the Hamiltonian cycle yields a perfect or near perfect (leaving one vertex unmatched if $n$ is odd) matching.  In \cite{CLZ3}, it was shown that there exist arbitrarily large $\sigma$-hypergraphs which do not have a perfect matching, and for which the number of unmatched vertices is quite large.  We state a result from this paper:

\begin{lemma} \label{not_r_good}
Let $H=H(n,r,q \mid \sigma)$, where $\sigma=(a_1,\ldots,a_s)$, $n \geq s$ and $ q \geq r$.  Suppose $gcd(\sigma)=d \geq 2$, and $q=t\pmod d$ where $1 \leq t \leq d-1$. Then in a maximum matching of $H$, there are at least $tn$ vertices left unmatched.  Hence $\nu(H) \leq \frac{n(q-t)}{r}$.
\end{lemma}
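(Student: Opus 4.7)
The plan is to exploit divisibility inside each class $V_i$ separately. Since every edge of $H$ meets each class in either $0$ vertices or in $a_j$ vertices for some $j$, and since $d=\gcd(\sigma)$ divides every $a_j$, the contribution of any single edge to any fixed class is a multiple of $d$. This is the only structural fact I would use.

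First I would fix a matching $M$ (eventually a maximum one) and look at one class $V_i$. Let $m_i$ denote the number of vertices of $V_i$ that are covered by $M$. Because $M$ is a disjoint union of edges, $m_i$ is a sum of numbers each of which is either $0$ or one of the $a_j$'s; by the gcd assumption every such summand is a multiple of $d$, so $d \mid m_i$. On the other hand $m_i \leq |V_i|=q$ and $q \equiv t \pmod{d}$ with $1 \leq t \leq d-1$. The largest multiple of $d$ not exceeding $q$ is $q-t$, so $m_i \leq q-t$, which means at least $t$ vertices of $V_i$ are left unmatched.

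Summing the inequality $m_i \leq q-t$ over $i=1,\ldots,n$ gives at least $tn$ unmatched vertices overall. Since the total number of vertices covered by $M$ equals $r\,|M|$, we obtain $r\,\nu(H) \leq nq - tn = n(q-t)$, and dividing by $r$ yields the claimed bound $\nu(H) \leq \frac{n(q-t)}{r}$.

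There is no real obstacle here: the whole argument is a one-line divisibility observation applied class by class, together with bookkeeping. The only point that requires a moment of care is noting that the bound $m_i \leq q-t$ holds for \emph{every} matching, and in particular for a maximum one, so the lower bound of $tn$ on unmatched vertices is universal; the hypotheses $n \geq s$ and $q \geq r$ are invoked only to guarantee that $H$ has at least one edge so that the statement is non-vacuous.
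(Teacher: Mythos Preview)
Your argument is correct. Note, however, that the present paper does not actually supply a proof of this lemma: it is quoted verbatim as a result from the companion paper on matchings (reference \cite{CLZ3}), so there is no in-paper proof to compare against. Your class-by-class divisibility argument --- that every edge meets each $V_i$ in a multiple of $d$ vertices, hence any matching covers at most $q-t$ vertices per class --- is precisely the natural proof, and is almost certainly what the cited reference contains.
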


In the sequel, we will show that these $\sigma$-hypergraphs, however, still have both a Berge and a sharp Hamiltonian cycle when $q$ and $n$ are large enough.

\section{Berge Cycles}

Let us first consider this type of cycle, and give necessary and sufficient conditions for the existence of Hamiltonian Berge cycles  in $\sigma$-hypergraphs.

\begin{theorem}
Let $H=H(n,r,q \mid \sigma)$ with $\sigma=(a_1,a_2,\ldots,a_s)$, $s \geq 2$ and $\Delta=a_1 \geq a_2 \geq \ldots \geq a_s=\delta\geq1$.  If $\sigma$ is not rectangular and $q \geq \Delta$ and $n \geq s$, then $H$ has Hamiltonian Berge cycle.  If $\sigma$ is rectangular, then there is a Berge Hamiltonian cycle when $q \geq \Delta+1$ or $n \geq s+1$.
\end{theorem}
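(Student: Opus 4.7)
The strategy is to fix a cyclic ordering of the $nq$ vertices so that any two cyclically consecutive vertices can be completed to an edge of $H$, and then to choose, for each consecutive pair, a distinct hyperedge containing it.

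I would arrange the vertices in row-major order
\[
v_{1,1}, v_{1,2}, \ldots, v_{1,n},\; v_{2,1}, v_{2,2}, \ldots, v_{2,n},\; \ldots,\; v_{q,1}, v_{q,2}, \ldots, v_{q,n},\; v_{1,1},
\]
so that any two cyclically consecutive vertices lie in two different classes (adjacent classes $V_i, V_{i+1}$ inside a row, or $V_n, V_1$ at the row break, or $V_n, V_1$ at the closing step). Under the standing hypotheses $n \geq s$ and $q \geq \Delta$, any pair $(u,v)$ with $u \in V_i$ and $v \in V_j$, $i \neq j$, extends to an edge with partition $\sigma$: place $u$ into one part and $v$ into another, and distribute the remaining $s-2$ parts among other classes. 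So a hyperedge is always \emph{available}; the real work is to pick the $nq$ hyperedges pairwise distinct.

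For the non-rectangular case I would exploit $\Delta > \delta$ to give each edge a distinguishing role: at step $(k,i)$ put $u=v_{k,i}$ into the part of size $\Delta$ located at $V_i$, put $v=v_{k,i+1}$ into the part of size $\delta$ at $V_{i+1}$, and fill the remaining $s-2$ parts from $V_{i+2}, \ldots, V_{i+s-1}$ (indices mod $n$). Across distinct $i$, the edges already differ in which class hosts the $\Delta$-part; across distinct $k$ at the same $i$, the index $k$ is encoded in the choice of the ``extra'' $\Delta-1$ vertices taken from $V_i$, which is possible because $\Delta \geq 2$ and $q \geq \Delta$. For the rectangular case I would exploit whichever slack the hypothesis supplies. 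If $n \geq s+1$, I would place the $s$ parts of the edge for pair $(k,i)$ into the $s$ consecutive classes $V_i, V_{i+1}, \ldots, V_{i+s-1}$ (mod $n$); different $i$ give different windows of $s$ classes out of $n$, and $k$ selects which specific vertices inside those classes are used. If $q \geq \Delta+1$, I would instead fix the set of $s$ classes used relative to $i$ and use the extra row to parametrise the ``fill'' vertices in each part by $k$.

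The main obstacle will be checking distinctness cleanly, especially in the rectangular subcase $\Delta = 1$ (so $\sigma=(1,\ldots,1)$ and each edge is merely a transversal of $s$ distinct classes with one vertex per class), where there is no internal structure inside a part to exploit. There the argument splits into the two subcases $n \geq s+1$ (rotate the chosen set of $s$ classes with $i$) and $q \geq 2$ (rotate the chosen vertex within each class with $k$). A little care is also needed at the wrap-around pair $(v_{q,n}, v_{1,1})$ to ensure the closing edge has not already been used; this can be handled by choosing, for the final edge, a ``shift'' of parameters not previously employed, which is possible precisely because the conditions $q \geq \Delta+1$, $n \geq s+1$, or non-rectangularity each supply the one extra degree of freedom needed.
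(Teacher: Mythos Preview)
Your overall plan---fix the row-major cyclic order of the $nq$ vertices and assign to each consecutive pair a hyperedge containing it---is exactly the paper's plan. The difference is in how the $nq$ edges are manufactured so as to be pairwise distinct, and here your proposed mechanisms break down.

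\textbf{Gap in the non-rectangular case.} Your distinctness argument across different columns $i$ relies on the claim that ``the edges already differ in which class hosts the $\Delta$-part.'' This fails whenever $\sigma$ has repeated part sizes arranged so that your cyclic placement $(\Delta,\delta,a_2,\ldots,a_{s-1})$ around $\mathbb{Z}_n$ has a nontrivial period. Take $\sigma=(2,2,1,1)$ with $n=s=4$: at $(k,1)$ your class-to-size pattern is $V_1{:}2,\;V_2{:}1,\;V_3{:}2,\;V_4{:}1$, and at $(k,3)$ it is again $V_3{:}2,\;V_4{:}1,\;V_1{:}2,\;V_2{:}1$---the same pattern. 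With $q=\Delta=2$ every size-$2$ part is the whole class, so if your ``fill'' vertices for the size-$1$ parts are chosen uniformly (you do not say how), the two edges can literally coincide. Your distinctness argument across different rows $k$ has a similar problem: you say $k$ is ``encoded in the choice of the extra $\Delta-1$ vertices taken from $V_i$,'' but when $q=\Delta$ there is exactly one $\Delta$-subset of $V_i$, so nothing is encoded. (The vertex $v_{k,i+1}$ in the $\delta$-part does carry information about $k$, but you never invoke this, and in any case you would still need to specify the fills and argue.) So as written, the non-rectangular case is not proved.

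\textbf{How the paper avoids this.} Rather than encoding the pair $(k,i)$ into the edge by ad hoc choices, the paper generates all $nq$ edges as successive translates of a single seed edge under a ``helical'' shift of the $q\times n$ grid: shift one column to the right at each step, and shift one row up whenever a part wraps from $V_n$ to $V_1$. This is a free $\mathbb{Z}_{nq}$-action on the vertex set, so the $nq$ translates of the seed edge are automatically distinct unless the seed is fixed by a nontrivial shift---which happens precisely in the excluded case ($\sigma$ rectangular with $q=\Delta$ and $n=s$). The single uniform construction handles the non-rectangular case and both rectangular subcases at once, with no need for the case split you set up.
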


\begin{proof}
Let us take a partition $\sigma$ which is not rectangular --- we construct a Berge cycle as follows:  for $e_1$, we take the bottom $a_1$ vertices in $V_1$, the bottom $a_2$ vertices in $V_2$ and so on up to the bottom $a_s$ vertices in $V_s$.  For $e_{2}$, we ``shift" the edge one column to the right so that the parts are taken from $V_2$ to $V_{s+1}$.  We carry on in this fashion, and when we take part $a_{s-1}$ from $V_n$ then we take part $a_s$ from $V_1$, but ``shift" one row up.  We carry on in this way, shifting one column to the right each time, and shifting one row up each time --- when we reach the top row then we start using the bottom vertices again.  In all, we form $nq$ distinct edges in this way.  We can now order the vertices by labelling the bottom vertex in $V_1$ as $v_1$, the bottom vertex in $V_2$ as $v_2$, and so on upto the bottom vertex in $V_n$ as $v_n$ --- we then move up one row and label the vertices in this row $v_{n+1}$ up to $v_{2n}$, from left to right, and we carry on in this fashion until we have labelled all vertices in this way.  

Then the cycle $v_1,e_1,v_2,e_2,\ldots, v_{nq},e_{nq},v_1$ is a Hamiltonian Berge cycle.  

If $\sigma$ is rectangular and $q=\Delta$ and $n=s$, then there is only one edge and hence no Berge Hamiltonian cycle, otherwise the Berge Hamiltonian cycle can be constructed as above.

\end{proof}

Note:  The conditions are easily seen to be necessary, that is if $H$ has a Berge Hamiltonian cycle then necessarily $q \geq \Delta$ and $n \geq s$ otherwise $H$ has no edges, while when $\sigma$ is rectangular, $q$  must be at least $\Delta+1$ .  
\medskip

Figure \ref{berge} gives an example of a Hamiltonian Berge cycle for $H=H(n,r,q \mid \sigma)$ with $\sigma=(2,1)$, $s=2$ and $q=n=3$.  The cycle has 9 edges.  The shaded vertices form the edge in each case, and the vertices are numbered in cyclic order.

\begin{figure}[h!]
\centering
\includegraphics{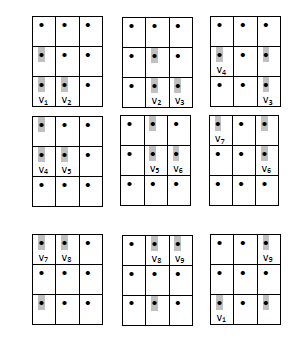} 
\caption{Berge Hamiltonian Cycle - shaded vertices represent the linking edges} \label{berge}
\end{figure}

\section{Sharp Hamiltonian Cycles}

We have  given necessary and sufficient conditions for a $\sigma$-hypergraph to contain a Berge Hamiltonian cycle.  Hence we now turn our attention to sharp Hamiltonian cycles which prove to be more challenging.  Although we shall be studying, in a later section, $k$-intersecting Hamiltonian cycles, we want to treat separately sharp cycles first, which are $k$-intersecting for $k=2$, because they illustrate very clearly the main techniques used in this paper and also because stronger results are possible with sharp cycles, when, in some cases, we prove that the Hamiltonian cycles obtained are either $t$-sharp or $(t,z)$-sharp.



We first present some basic observations about sharp Hamiltonian cycles in $r$-uniform hypergraphs.

\begin{lemma} \label{fact1}
Let $H$ be a $r$-uniform hypergraph and let $C$ be a sharp Hamiltonian cycle in $H$.  Then
\begin{enumerate}
\item{ $\frac{2|V(H)|}{r} \geq |E(C)| \geq \frac{|V(H)|}{r-1}$}
\item{$\nu(H) \geq \nu(C) = \left \lfloor \frac{|E(C)|}{2} \right \rfloor.$}
\item{If  $2 \nu(H) +1 < \frac{nq}{r-1}$,  there is no sharp Hamiltonian cycle in $H$.}
\end{enumerate}
\end{lemma}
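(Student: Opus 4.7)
The plan is to handle the three claims in order, with claim (3) following immediately from (1) and (2). Throughout, let $p = |E(C)|$ and let the edges of the sharp Hamiltonian cycle be $e_1, \ldots, e_p$, with indices modulo $p$. The crucial structural fact is that, by the sharp-cycle definition, a vertex $v$ lying in two edges $e_i$ and $e_j$ forces $|e_i \cap e_j|>0$, hence $i$ and $j$ must be cyclically consecutive. Consequently every vertex in $V(C)=V(H)$ has degree $1$ or $2$ in $C$.

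For claim (1), I would double-count vertex--edge incidences. Let $n_1, n_2$ denote the numbers of vertices of degree $1$ and $2$ respectively; then $n_1+n_2 = |V(H)|$ and $n_1+2n_2 = rp$. The upper bound $p \le \frac{2|V(H)|}{r}$ follows at once from $rp = n_1+2n_2 \le 2(n_1+n_2)=2|V(H)|$. For the lower bound, note that every degree-$2$ vertex belongs to exactly one intersection $e_i\cap e_{i+1}$, so $\sum_{i=1}^{p} |e_i\cap e_{i+1}| = n_2 = rp - |V(H)|$. Since sharpness gives $|e_i\cap e_{i+1}|\ge 1$ for each $i$, we get $p \le rp-|V(H)|$, i.e.\ $p \ge \frac{|V(H)|}{r-1}$.

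For claim (2), the inequality $\nu(H)\ge \nu(C)$ is immediate since any matching of $C$ (viewed as a subhypergraph of $H$) is a matching of $H$. To compute $\nu(C)$, I would observe that $e_i$ and $e_j$ are vertex-disjoint if and only if $i$ and $j$ are not cyclically consecutive; so matchings in $C$ correspond exactly to independent sets in the cycle graph on $\{1,\dots,p\}$, whose maximum size is well known to be $\lfloor p/2 \rfloor$. Taking $\{e_1,e_3,e_5,\ldots\}$ realizes the bound, giving $\nu(C)=\lfloor |E(C)|/2\rfloor$.

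For claim (3), I would argue contrapositively. If $H$ has a sharp Hamiltonian cycle $C$, then by (2) and (1),
\[
2\nu(H) \;\ge\; 2\nu(C) \;=\; 2\left\lfloor \frac{|E(C)|}{2}\right\rfloor \;\ge\; |E(C)|-1 \;\ge\; \frac{|V(H)|}{r-1} - 1 \;=\; \frac{nq}{r-1}-1,
\]
so $2\nu(H)+1 \ge \frac{nq}{r-1}$, contradicting the hypothesis. I do not anticipate a substantive obstacle; the only subtlety is the careful bookkeeping of degrees versus pairwise intersection sizes in claim (1), which must use the ``otherwise empty'' half of the sharp-cycle definition (not merely the nonempty consecutive intersections) to conclude that each vertex lies in at most two edges.
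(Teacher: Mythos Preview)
Your proof is correct and takes essentially the same approach as the paper: degree-counting for the bounds in (1), the odd-indexed edges as the extremal matching in (2), and a combination of (1) and (2) for the contradiction in (3). Your presentation is if anything slightly cleaner---you justify the equality $\nu(C)=\lfloor p/2\rfloor$ via the independence number of the cycle graph $C_p$ (the paper only argues maximality of the odd-indexed matching), and your contrapositive in (3) avoids the paper's extra integrality manoeuvre.
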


\begin{proof}
\mbox{}\\*

\noindent 1. \indent Consider $C=(e_1,e_2,\ldots,e_p)$.  Clearly, each edge of $C$ intersects the next edge, so each edge contributes at most $r-1$ vertices to $V(C)=V(H)$, and hence $|V(H)| \leq p(r-1)$ which imples $p=|E(C)| \geq \frac{|V(H)|}{r-1}$.

Now consider the degrees of the vertices in $C$.  No vertex can have degree greater than 2, so using the well known fact that \[r|E(C)|=\sum deg_C(v) \leq 2|V(H)|\] we get the result $|E(C)| \leq \frac{2|V(H)|}{r}$.

If we apply this  to $\sigma$-hypergraphs, where $V(H)=nq$, we get \[ \frac{2nq}{r} \geq |E(C)| \geq \frac{nq}{r-1}.\]
\medskip

\noindent 2. \indent  By the definition of a sharp cycle the  subset of $E(C)$ $\{e_{2j+1} : 0 \leq j \leq \left \lfloor \frac{|E(C)|}{2} \right \rfloor\}$ is a maximal matching in $C$, as the edges are distinct, and clearly any other edge in $C$ will intersect one of these edges.  Hence \[\nu(H) \geq \nu(C) = \left \lfloor \frac{|E(C)|}{2} \right \rfloor.\]
\medskip

\noindent 3. \indent Clearly, by part 1 of this lemma and the assumption $2 \nu(H) +1 < \frac{nq}{r-1}$, \[|E(C)| \geq \frac{nq}{r-1} > 2\nu(H)+1 \geq 2\nu(C)+1.\]  But $|E(C)|$ is an integer hence 
\begin{eqnarray*}
|E(C)| &\geq& 2\nu(H)+2 \geq 2\nu(C)+2 \\
&=& 2 \left \lfloor \frac{|E(C)|}{2} \right \rfloor + 2 \geq 2 \left( \frac{|E(C)|-1}{2} \right ) + 2\\
&=& |E(C)|+1,
\end{eqnarray*}

 a contradiction.
\end{proof}

\subsection{Examples of $\sigma$-hypergraphs with no Hamiltonian cycle.}

Let us consider examples of $\sigma$-hypergraphs in which there is no sharp Hamiltonian cycle.

For the first example we use Lemma \ref{fact1} --- consider $H=H(n,r,q \mid \sigma)$ with $\sigma=(\Delta,\Delta,\ldots,\Delta)$ and  $s(\sigma)=\Delta \geq 2$, that is $\sigma$ is a square partition.  Let $n=q=2\Delta-1$.

In this case, it is easy to see that $\nu(H)=1$, while $\frac{nq}{r-1}=\frac{(2\Delta-1)^2}{\Delta^2-1}$.  Hence, if $\frac{(2\Delta-1)^2}{\Delta^2-1} > 3$, that is for $\Delta \geq 3$, there is no sharp Hamiltonian cycle in $H$.

As a second example, consider $H=H(n,r,q \mid \sigma)$ with $\sigma=(\Delta,1,\ldots,1)$ where $s(\sigma)=\Delta= \frac{r+1}{2} \geq 4$ and $q=n=\frac{r+1}{2}$.  Consider an edge $E_1$ with the first part of size $\Delta$ taken from $V_1$, and the other parts taken from $V_2$ to $V_n$ respectively.  It is clear that any other edge intersects this edge --- we need at least four edges for a sharp Hamiltonian cycle, but this is impossible since all these edges intersect $E_1$ and hence the cycle is not sharp.
\bigskip

\noindent In view of the above examples, our goal is to deal with the following problem which now arises naturally:
\begin{problem}\label{mainresult1}
Let $H=H(n,r,q \mid \sigma)$, where $s(\sigma) \geq 2$.  Does there exists $q(\sigma)$ and $n(\sigma)$ such that $\forall q \geq q(\sigma)$ and $n \geq n(\sigma)$, $H$ has a sharp Hamiltonian cycle?
\end{problem}

In the sequel we will supply an affirmative solution to this problem.  So we begin with some results which will then allow us to find a solution to this problem.

\begin{lemma} \label{rdivq}
Let $H=H(n,r,q \mid \sigma)$ with $\sigma=(a_1,a_2,\ldots,a_s)$, $s \geq 2$ and $\Delta=a_1 \geq a_2 \geq \ldots \geq a_s=\delta\geq1$.   Let $1 \leq p <s$, and let \[ t=\sum_{i=1}^{i=p}a_i \mbox{   and   } z=\sum_{i=p+1}^{i=s}a_i =r-t.\] If $r | q$ and $n \geq s+1$, then $H$ has a $(t,z)$-sharp Hamiltonian cycle. Moreover if there exists $k$ such that \[ t=\sum_{i=1}^{i=p}a_i =\sum_{i=p+1}^{i=s}a_i ,\] then the cycle is $t$-sharp.
\end{lemma}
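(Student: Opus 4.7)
The plan is to build the cycle explicitly by first partitioning $V(H)$ into ``half-edge'' blocks and then walking through them in a prescribed order. Since $r\mid q$, I would divide the $q\times n$ grid into $q/r$ horizontal \emph{stripes} of $r$ consecutive rows each; within each stripe the top $t$ rows form the \emph{T-rows} and the bottom $z$ rows form the \emph{Z-rows}. In stripe $m$ I would define, for $j=1,\ldots,n$, an \emph{A-block} $A_j^{(m)}$ of size $t$ consisting of $a_i$ consecutive T-rows of class $V_{j+i-1}$ (class indices taken mod $n$) for $i=1,\ldots,p$, arranged in a staircase so that different A-blocks within the same stripe occupy disjoint rows; dually I would define $n$ \emph{B-blocks} $B_j^{(m)}$ of size $z$ in the Z-rows, using the classes $V_{j+p},\ldots,V_{j+s-1}$ with the parts $a_{p+1},\ldots,a_s$. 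A short staircase-tiling check shows that these $2nq/r$ blocks partition $V(H)$.

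With the blocks in hand, I would list the edges of the cycle stripe by stripe. Within stripe $m$ set
\[
e_{2k-1}^{(m)} = A_k^{(m)}\cup B_k^{(m)} \ (1\le k\le n),\qquad e_{2k}^{(m)} = A_k^{(m)}\cup B_{k+1}^{(m)} \ (1\le k<n),
\]
and take the \emph{transition} edge $e_{2n}^{(m)} = A_n^{(m)}\cup B_1^{(m+1)}$, with stripe indices read mod $q/r$. Concatenating across all $q/r$ stripes yields a closed sequence of $p = 2nq/r$ edges. The hypothesis $n\ge s+1$ enters precisely here: in any edge of the form $A_k^{(m)}\cup B_{k+1}^{(m)}$ or $A_n^{(m)}\cup B_1^{(m+1)}$ the A- and B-parts together span $s$ cyclically-placed classes, and one needs an extra ``free'' class beyond the $s$ classes under a single unshifted pair in order for the classes to be distinct and for the part-multiset to equal $\sigma$.

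The main obstacle is the bookkeeping needed to confirm that this sequence is a $(t,z)$-sharp Hamiltonian cycle. Direct inspection gives $e_{2k-1}^{(m)}\cap e_{2k}^{(m)} = A_k^{(m)}$ (size $t$) and $e_{2k}^{(m)}\cap e_{2k+1}^{(m)} = B_{k+1}^{(m)}$ (size $z$), with analogous identities across each stripe boundary, so the alternation matches the parity of the global edge index; closing up gives $e_p\cap e_1 = B_1^{(1)}$ of size $z$, consistent with $p=2nq/r$ being even. Non-consecutive edges are disjoint because all A- and B-blocks are globally pairwise disjoint by construction (A-blocks sit in T-rows, B-blocks in Z-rows, and the staircase tiling handles any shared class within a stripe). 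Hamiltonicity follows since every block lies in exactly two consecutive edges and the blocks cover $V(H)$. Finally, the \emph{moreover}-clause is immediate: when $t=z$, every consecutive overlap has size $t$, so the cycle is $t$-sharp.
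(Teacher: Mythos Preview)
Your construction is essentially identical to the paper's: your $A_k^{(m)}$ and $B_k^{(m)}$ are precisely the first $p$ parts and last $s-p$ parts of the paper's diagonal edge $E_k$ in the matching $M$, so that $e_{2k-1}^{(m)}=A_k^{(m)}\cup B_k^{(m)}$ is the paper's $E_k$ and $e_{2k}^{(m)}=A_k^{(m)}\cup B_{k+1}^{(m)}$ is the paper's $E_k^*$; the interleaving and stripe-linking then agree verbatim with the paper's cycle $E_1,E_1^*,E_2,E_2^*,\ldots$ and its extension to $q=xr$.

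One wording slip to fix: ``different A-blocks within the same stripe occupy disjoint \emph{rows}'' is false as stated, since there are only $t$ T-rows but $n$ A-blocks. What is true (and what you use) is that the A-blocks are pairwise vertex-disjoint: in any column $V_c$ exactly the $p$ blocks $A_{c},A_{c-1},\ldots,A_{c-p+1}$ meet $V_c$, contributing parts of sizes $a_1,\ldots,a_p$ in the staircase row-ranges, which tile the $t$ T-rows of that column. With that correction your partition claim and the subsequent intersection bookkeeping go through exactly as you describe.
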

\begin{proof}
Let us consider the first $r$ vertices in $V_1,\ldots,V_ n$ as an $r \times n$ grid of vertices.  We will construct  two perfect matchings $M$ and $M^*$, whose edges will then be used to form a sharp Hamiltonian cycle $C_1$ with $2n$ edges.  Observe that we need $s=s(\sigma) \geq 2$, otherwise if $s=1$, that is $\sigma=(r)$, then for $n \geq 2$ $H$ is not connected, while for $n=1$, $H$ is the complete $r$-uniform hypergraph on $q$ vertices, which is trivially Hamiltonian for $q \geq r+1$.

For the first matching $M$, let each column of $r$ vertices be partitioned into $s$ consecutive parts of sizes $a_1,a_2,\ldots,a_s$.  The part $a_i$ in $V_j$ will be referred to as the $i^{th}$ part in $V_j$.   The edge  $E_1$ is formed by taking the top  $a_1$ vertices from $V_1$, the second part of size $a_2$ from $V_2$  and so on, ``in diagonal fashion".  This is repeated for $E_2$ by ``shifting one class to the right", taking the top $a_1$ vertices from $V_2$, the second part from $V_3$ etc.  In general, the edge $E_j$, $1 \leq j \leq n$, takes the first part from $V_j$, the second part from $V_{j+1}$ and in general the $k^{th}$ part from $V_{j+k-1}$, for $1 \leq k \leq s$, with addition modulo $n$.  This gives a perfect matching $M$ with $n$ edges.

For the second matching $M^*$, let $1 \leq p <s$, and let \[ t=\sum_{i=1}^{i=p}a_i \mbox{   and   } z=\sum_{i=p+1}^{i=s}a_i =r-t\]  Then we take edge $E_1^*$ such that parts $a_1$ to $a_{p}$ are taken as in edge $E_1$, while part $a_{p+1}$ to $a_s$ are taken as in edge $E_2$ - this is possible since $n \geq s+1$ and hence $E_1^*$ is different from all $E_i$ in $M$.  In general, $E_i^*$ has parts $a_1$ to $a_{p}$ as in edge $E_i$, and parts $a_{p+1}$ to $a_s$ as in edge $E_{i+1}$, where addition is modulo $n$.  It is clear that these edges form another perfect matching, and that they are distinct from the edges taken in $M$.

Now the sharp Hamiltonian cycle $C_1$ is formed by taking the edges in $M \cup M^*$ in this order: \[E_1,E_1^*,E_2, E_2^*,\ldots,E_{i},E_{i}^*,E_{i+1},E_{i+1}^*,\ldots,E_n,E_n^*.\]  In general, $|E_{i} \cap E_{i}^*|=a_1+\ldots+a_{p}=t$ and $|E_{i}^* \cap E_{i+1}|=a_{p+1}+ \ldots + a_s=z=r-t$, and at the end $E_n^* $ has parts $a_{p+1}$ to $a_s$ to coincide with the same parts in $E_1$ to close the cycle.  It is clear that the cycle is $(t,z)$-sharp since the only intersections between edges are the ones described.  If there exists $p$ such that \[ t=\sum_{i=1}^{i=p}a_i =\sum_{i=p+1}^{i=s}a_i =z=r-t,\] then the cycle is $t$-sharp.

Now if $q \geq r$, then for the next $r$ vertices in $V_1$ to $V_n$ we can create another cycle $C_2$ in the same way.  To link $C_1$ to $C_2$, we take  parts $a_{p+1}$ to $a_s$ of the last edge of $M^*$ in $C_1$ to coincide with the same parts in the first edge of $M$  in $C_2$.  

Hence if $q=xr$, we have a sharp Hamiltonian cycle $C=C_1 \cup C_2 \cup \ldots \cup C_x$, with the last edge of $C_x$ intersecting the first edge of $C_1$ in the  parts $a_{p+1}$ to $a_s$ of $E_1$, thus forming a $(t,z)$-sharp Hamiltonian cycle in $H$.

Now if there exists $k$ such that \[ t=\sum_{i=1}^{i=p}a_i =\sum_{i=p+1}^{i=s}a_i ,\] it is clear that the cycle is $t$-sharp.

\end{proof}

\begin{lemma} \label {r+1divq}
Let $H=H(n,r,q \mid \sigma)$ with $\sigma=(a_1,a_2,\ldots,a_s)$, $s \geq 2$ and $\Delta=a_1 \geq a_2 \geq \ldots \geq a_s=\delta\geq1$.  Let $1 \leq p <s$, and let \[ t=\sum_{i=1}^{i=p}a_i \mbox{   and   } z=\sum_{i=p+1}^{i=s}a_i =r-t.\] If $r+1 | q$ and $n \geq s+1$, then $H$ has an $(t-1,z)$-sharp Hamiltonian cycle. 
\end{lemma}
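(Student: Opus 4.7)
The plan is to follow the construction of Lemma \ref{rdivq} almost verbatim, using the extra row available in each block (we now split the $q$ rows into $x=q/(r+1)$ blocks of size $r+1$ instead of size $r$) to shift one part of each $M^*$-edge by one row, so that every ``$t$''-type intersection in the resulting cycle shrinks by exactly one vertex while the ``$z$''-type intersections remain untouched.

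Concretely, within a single block I would label the rows $0,1,\ldots,r$. The matching $M$ is defined exactly as in Lemma \ref{rdivq}: $E_j$ places its $k$-th part on rows $a_1+\ldots+a_{k-1}+1,\ldots,a_1+\ldots+a_k$ of $V_{j+k-1}$, so $M$ lives entirely in rows $1,\ldots,r$. I then define $E_j^*$ by keeping parts $a_2,\ldots,a_p$ as in $E_j$ and parts $a_{p+1},\ldots,a_s$ as in $E_{j+1}$ (exactly as before), but shifting the $a_1$ part of $E_j^*$ upward by one row, so that it occupies rows $0,1,\ldots,a_1-1$ of $V_j$ rather than rows $1,\ldots,a_1$. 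Row $0$ of $V_j$ is untouched by $M$ and by any other edge of $M^*$, while row $a_1$ of $V_j$ is vacated; so $M^*$ is still a matching, and together $M\cup M^*$ covers all $n(r+1)$ vertices of the block.

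The intersection count changes only in the $a_1$-part of $V_i$: $E_i$ occupies rows $1,\ldots,a_1$ while $E_i^*$ occupies rows $0,\ldots,a_1-1$, so they share exactly $a_1-1$ vertices in $V_i$. Combined with the unchanged overlap $a_2+\ldots+a_p$ on the remaining shared parts, this gives $|E_i\cap E_i^*|=t-1$. The intersection $E_i^*\cap E_{i+1}$ is unaffected, because it lives in the classes $V_{i+p+1},\ldots,V_{i+s}$, none of which equals $V_i$; so $|E_i^*\cap E_{i+1}|=z$ as in Lemma \ref{rdivq}. Ordering the edges as $E_1,E_1^*,E_2,E_2^*,\ldots,E_n,E_n^*$ within each block, and linking consecutive blocks exactly as in Lemma \ref{rdivq} through the $a_{p+1},\ldots,a_s$ parts (which the shift does not touch), then produces a $(t-1,z)$-sharp Hamiltonian cycle on all $nq$ vertices.

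The main technical worry is the matching check for $M^*$ after the shift: one must verify that row $0$ of $V_j$ is not already claimed by some other $M^*$-edge reaching $V_j$, and that the shift does not create new unwanted overlaps with $E_{i-1}$, $E_{i+1}$ or with neighbouring $M^*$-edges in the common classes. Both follow from the same diagonal-disjointness bookkeeping as in the proof of Lemma \ref{rdivq}, once one keeps careful track of which $M^*$-edge contributes to which row of each $V_j$; I would expect this part to be routine but tedious.
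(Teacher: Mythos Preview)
Your proposal is correct and is essentially the paper's own argument. The only difference is cosmetic: the paper places the extra row at the bottom (row $r+1$) and forms the $a_1$-part of $E_i^*$ from the top $a_1-1$ vertices of $V_i$ together with that bottom vertex, whereas you place the extra row at the top (row $0$) and take rows $0,\ldots,a_1-1$; after relabelling, the two constructions coincide, and the intersection bookkeeping and block-linking are identical.
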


\begin{proof}
Let us first consider the first $r+1$ vertices in $V_1,\ldots,V_ n$ as an $(r+1) \times n$ grid of vertices.  As in the previous Lemma, we will construct  two matchings $M$ and $M^*$, whose edges can be used to form a sharp Hamiltonian cycle $C_1$ with $2n$ edges.

The first matching $M$ is constructed in exactly the same way as $M$ was constructed in Lemma \ref{rdivq} to cover the top $r \times n$ grid of vertices, having $n$ edges and  leaving out the vertices in the $(r+1)^{th}$ row.

For the second matching $M^*$, again we take $1 \leq p <s$, and let \[ t=\sum_{i=1}^{i=p}a_i \mbox{   and   } z=\sum_{i=p+1}^{i=s}a_i =r-t.\] The edges are then formed as follows: for edge $E_1^*$, part $a_1$ is taken as in $E_1$, but replacing the last vertex in this part with the $(r+1)^{th}$ vertex in the same class.  Parts $a_2$ to $a_{p}$ are taken as per edge $E_1$, while parts $a_{p+1}$ to $a_s$ are taken as per edge $E_2$.  Therefore, in general, $E_i^*$ has part $a_1$ taken from $V_i$ to include the top $a_1-1$ vertices, and the last vertex in the class, parts $a_2$ to $a_{p}$ as per edge $E_i$ , and parts $a_{p+1}$ to $a_s$ as in edge $E_{i+1}$.

Now the sharp Hamiltonian cycle is $C_1=(E_1,E_1^*,E_2,E_2^*,\ldots,E_n,E_n^*)$ so that $|E_i \cap E_i^*|=t-1$, and $|E_i^* \cap E_{i+1}|=z=r-t$.  The last edge $E_n^*$ intersects $E_1$ in parts $a_{p+1}$ to $a_s$.

Now if $q \geq r+1$ and $(r+1)|q$, then for the next $r+1$ vertices in $V_1$ to $V_n$ we can create another cycle $C_2$ in the same way.  To link $C_1$ to $C_2$, we take the parts $a_{p+1}$ to $a_s$ for $E_n^*$ to coincide with the same parts in the first edge in $C_2$.  

Hence if $q=x(r+1)$, we have a sharp Hamiltonian cycle $C=C_1 \cup C_2 \cup \ldots \cup C_x$, with the last edge of $C_x$ intersecting the first edge of $C_1$ in the parts $a_{p+1}$ to $a_s$  of $E_1$, thus forming a $(t-1,z)$-sharp Hamiltonian cycle in $H$.

Again, if there exists $p$ such that \[ t-1= \left(\sum_{i=1}^{i=p}a_i \right)-1 =\sum_{i=p+1}^{i=s}a_i,\] then the cycle is $(t-1)$-sharp.
\end{proof}

We shall use the following classical theorem by Frobenius which states:

\begin{theorem} \label{Frobenius}
Let $a_1,a_2 $ be positive integers with $gcd(a_1,a_2)=1$.  Then for  $n \geq (a_1-1)(a_2-1)$, there are nonnegative integers $x$ and $y$ such that  $x a_1 +y a_2=n$.
\end{theorem}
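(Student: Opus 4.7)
The plan is to reduce the problem modulo $a_1$ and exploit coprimality so that the multiples of $a_2$ sweep out every residue class. First I would observe that since $gcd(a_1,a_2)=1$, the $a_1$ integers $0,\,a_2,\,2a_2,\,\ldots,\,(a_1-1)a_2$ are pairwise incongruent modulo $a_1$: the relation $ia_2\equiv ja_2\pmod{a_1}$ gives $a_1\mid(i-j)a_2$, and coprimality forces $a_1\mid i-j$, which is impossible for $0\leq i<j\leq a_1-1$. Hence these $a_1$ values form a complete residue system mod $a_1$.

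Given $n$, I would pick the unique $y\in\{0,1,\ldots,a_1-1\}$ with $ya_2\equiv n\pmod{a_1}$, and then set $x=(n-ya_2)/a_1$, which is automatically an integer. This immediately yields $n=xa_1+ya_2$ with $y\geq 0$ by construction; the only remaining task is to show that $x$ is nonnegative whenever $n\geq(a_1-1)(a_2-1)$.

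The crux is a single inequality. Suppose, toward a contradiction, that $x\leq -1$. Then $xa_1\leq -a_1$, while the bound $y\leq a_1-1$ gives $ya_2\leq(a_1-1)a_2$, so
\[ n \;=\; xa_1+ya_2 \;\leq\; -a_1+(a_1-1)a_2 \;=\; a_1a_2-a_1-a_2 \;=\; (a_1-1)(a_2-1)-1. \]
This contradicts the hypothesis $n\geq(a_1-1)(a_2-1)$, forcing $x\geq 0$ and completing the representation.

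I do not foresee a serious obstacle here, as this is a classical Frobenius-type argument. The one subtle point worth flagging is the insistence on the canonical representative $y\in\{0,\ldots,a_1-1\}$ rather than an arbitrary one congruent to $n a_2^{-1}\pmod{a_1}$: it is precisely this choice that keeps $ya_2$ bounded by $(a_1-1)a_2$ and lets the threshold $(a_1-1)(a_2-1)$ be attained exactly, rather than a weaker bound like $(a_1-1)a_2$.
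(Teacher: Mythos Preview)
Your argument is correct and is the standard proof of this two-coin Frobenius result. The paper itself does not supply a proof: it simply quotes the statement as a classical theorem of Frobenius and then applies it with $a_1=r$, $a_2=r+1$ to write $q=xr+y(r+1)$ whenever $q\geq r(r-1)$. So there is no proof in the paper to compare against, but your write-up would serve as a self-contained justification were one desired.
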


Using Lemmas \ref{rdivq} and \ref{r+1divq} combined with Theorem \ref{Frobenius}, we can now present an affirmative solution to Problem \ref{mainresult1}, which we restate as a Theorem:

\begin{theorem} \label{SHC}
Let $H=H(n,r,q \mid \sigma)$, where $s(\sigma) \geq 2$.  If $q\geq r(r-1)$ and $n\geq s+1$, then $H$ has a sharp Hamiltonian cycle.
\end{theorem}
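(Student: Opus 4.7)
The plan is to reduce the theorem to Lemmas \ref{rdivq} and \ref{r+1divq} by way of Theorem \ref{Frobenius}. Since $\gcd(r, r+1) = 1$, Frobenius guarantees that every integer $q \geq (r-1) \cdot r = r(r-1)$ can be written as $q = xr + y(r+1)$ for some nonnegative integers $x, y$. If $x = 0$ or $y = 0$ the statement already follows from a single application of Lemma \ref{r+1divq} or Lemma \ref{rdivq} respectively, so from here on I may assume $x, y \geq 1$.

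I would then split the $q$ rows of the vertex grid into $x$ consecutive blocks of $r$ rows each, followed by $y$ consecutive blocks of $r+1$ rows each, fixing a single choice of parameter $p$ (and the resulting $t = \sum_{i \leq p} a_i$, $z = r - t$) to be used throughout. On each $r$-block the construction in the proof of Lemma \ref{rdivq} yields a sharp sub-cycle on $2n$ edges that covers that block, and on each $(r+1)$-block the construction in the proof of Lemma \ref{r+1divq} yields a sharp sub-cycle on $2n$ edges that covers that block. Both lemma proofs already explain how to link consecutive sub-cycles of the same type, namely by arranging the last $E^*$-edge of one sub-cycle and the first $E$-edge of the next sub-cycle to overlap in the parts $a_{p+1}, \ldots, a_s$. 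Since this linking rule depends only on the common choice of $p$ and not on whether the sub-cycles come from $r$- or $(r+1)$-blocks, the same rule applies across the boundary between the two block types. The result is a single closed edge-sequence visiting every vertex of $H$.

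The main obstacle will be verifying that this combined sequence is genuinely a sharp Hamiltonian cycle. Consecutive overlaps are, by construction, of size $t$, $z$, or $t-1$, all of which are positive. For non-consecutive intersections I would split into cases: within a single block the sharpness is inherited from the underlying lemma, while edges lying in different blocks are automatically vertex-disjoint because the row-blocks partition $V(H)$ and each sub-cycle's edges live in its own block. The only delicate case is the pair of sub-cycles adjacent across a splice, where I would verify that the only shared vertices are the prescribed $z$ vertices of the splice overlap; this follows because the splicing mechanism is identical to the one already checked inside each lemma, and the hypothesis $n \geq s+1$ ensures that all edges involved in a splice are distinct. With these checks in place, the constructed closed edge-sequence witnesses the theorem.
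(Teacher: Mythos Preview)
Your proposal is correct and follows essentially the same route as the paper: apply Frobenius to write $q = xr + y(r+1)$, build the sub-cycles on the $r$-row and $(r+1)$-row blocks via Lemmas~\ref{rdivq} and~\ref{r+1divq}, and splice them together using the common parts $a_{p+1},\ldots,a_s$. If anything, you are slightly more explicit than the paper in checking sharpness across the splices.
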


\begin{proof}

By Theorem \ref{Frobenius}, we know that if $q \geq  r(r-1)$, there exist nonnegative integers $x$ and $y$ such that  $x r +y(r+1)=q$, since $r$ and $r+1$ are always coprime.  So let us divide the $q \times n$ grid into $x$ consecutive grids of size $r \times n$, followed by $y$ consecutive grids of size $(r+1) \times n$.  If we consider the $xr \times n$ grid first, we know that by Lemma \ref{rdivq}, there is a sharp Hamiltonian cycle $C_1$ covering these vertices, and by Lemma \ref{r+1divq}, there is a sharp Hamiltonian cycle $C_2$ covering the $y(r+1) \times n$ grid.  If $x=0$ or $y=0$, then $C_1$, respectively $C_2$ give the required sharp Hamiltonian cycle. So we may assume that both $x$ and $y$ are greater than $0$.  We now need to look at linking $C_1$ to $C_2$ and viceversa.  Firstly, instead of linking the last edge in $C_1$ with the first one, we link it to the first edge in $C_2$, by taking the parts $a_{k+1}$ to $a_s$ for this last edge to coincide with the parts $a_{k+1}$ to $a_s$ in the first edge of $C_2$.  The last edge of $C_2$,  must be linked to the first edge in $C_1$.  So we take the parts $a_{k+1}$ to $a_s$ of the last edge in $C_2$ to coincide with these parts in the first edge in $C_1$.  Thus $C_1 \cup C_2$ form a sharp Hamiltonian cycle in $H$.
\end{proof}

\section{$k$-intersecting Hamiltonian cycles}

We now turn to $k$-intersecting cycles and generalise the results obtained in the previous section to $k$-intersecting Hamiltonian cycles in $\sigma$-hypergraphs.  Recall that a \emph{$k$-intersecting cycle} $C=(e_1,e_2,\ldots,e_p)$ is such that \[|e_i \cap e_{i+1} \cap \ldots \cap e_{i+k-1}| >0\] for $1 \leq i \leq p$, where addition is modulo $p$, while any other collection of $k$ or more edges has an empty intersection.  If $V(C)=V(H)$ then $C$ is a \emph{$k$-intersecting Hamiltonian cycle}.

\begin{lemma} \label{kint1}
Let $H=H(n,r,q \mid \sigma)$ with $\sigma=(a_1,a_2,\ldots,a_s)$, $s \geq 2$ and $\Delta=a_1 \geq a_2 \geq \ldots \geq a_s=\delta\geq1$.  Let $2 \leq k \leq s$.  If $r | q$ and $n \geq s+1$, then $H$ has a $k$-intersecting Hamiltonian cycle.
\end{lemma}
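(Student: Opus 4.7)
The plan is to generalise the two-matching construction of Lemma \ref{rdivq} to $k$ interleaved matchings. Since $2\le k\le s$, I can partition the part indices $\{1,2,\ldots,s\}$ into $k$ non-empty consecutive groups $G_1,G_2,\ldots,G_k$; the case $k=2$ recovers the split $\{1,\ldots,p\}$ versus $\{p+1,\ldots,s\}$ used there.

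Working first in the top $r\times n$ subgrid, for $1\le j\le n$ and $0\le\ell\le k-1$ I define an edge $E_j^{(\ell)}$ whose $i$-th part is the $i$-th block of $a_i$ consecutive vertices in class $V_{c_i}$, where $c_i = j+i-1+\varepsilon_i^{(\ell)}\pmod{n}$ and $\varepsilon_i^{(\ell)}=1$ precisely when $i\in G_{k-\ell+1}\cup\cdots\cup G_k$. Thus $E_j^{(0)}$ is the diagonal edge of Lemma \ref{rdivq}, $E_j^{(\ell+1)}$ is obtained from $E_j^{(\ell)}$ by shifting one additional group $G_{k-\ell}$ of parts one class to the right, and the step $E_j^{(k-1)}\to E_{j+1}^{(0)}$ shifts the parts in $G_1$. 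The hypothesis $n\ge s+1$ keeps the class indices within each edge distinct, and for each fixed $\ell$ the family $\{E_j^{(\ell)}:1\le j\le n\}$ forms a perfect matching $M^{(\ell)}$ of the top $r\times n$ subgrid.

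I claim the cyclic sequence $C_1=(E_1^{(0)},E_1^{(1)},\ldots,E_1^{(k-1)},E_2^{(0)},\ldots,E_n^{(k-1)})$ is a $k$-intersecting Hamiltonian cycle on the subgrid. The key point is that each transition between consecutive edges toggles exactly one group's class-shift while the other $k-1$ groups remain fixed, and the toggled groups cycle through $G_k,G_{k-1},\ldots,G_1$ repeatedly as one traverses the cycle. Hence over any $k$ consecutive edges there are $k-1$ transitions hitting $k-1$ distinct groups, so exactly one group $G_m$ is stable across all $k$ edges and supplies the required common vertices, while over $k+1$ consecutive edges all $k$ groups are toggled and no part is shared throughout. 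For a non-consecutive $k$-subset, for any fixed part index $i$ the edges sharing a given class in part $i$ form disjoint plateaus of length exactly $k$, so a common class in part $i$ would force the subset to coincide with one plateau and hence to be consecutive, a contradiction.

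Finally, writing $q=xr$, I tile the $q\times n$ grid by $x$ horizontal $r\times n$ subgrids, construct a cycle $C_b$ in each as above, and splice them together by exactly the bridging device used at the end of Lemma \ref{rdivq}: the closing edge $E_n^{(k-1)}$ of each $C_b$ is redefined so that its $G_2\cup\cdots\cup G_k$-parts lie in block $b+1$ rather than block $b$, effectively rerouting the internal $G_1$-transition $E_n^{(k-1)}\to E_1^{(0)}$ across the block boundary. The delicate step, as I see it, is verifying that the $k$-intersecting property survives these splices, since the modified closing edge no longer agrees with all its predecessors on $G_2\cup\cdots\cup G_k$; this should reduce to the plateau argument above, together with the observation that the cyclic order of group transitions is unaffected by the rerouting.
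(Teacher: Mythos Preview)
Your single-block construction is essentially the paper's, mildly generalised: the paper uses the specific grouping $G_1=\{1\},\ldots,G_{k-1}=\{k-1\},G_k=\{k,\ldots,s\}$, and your plateau argument for the $k$-intersection property inside one $r\times n$ block is correct and in fact more explicit than what the paper writes.

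The genuine gap is in the splice. Modifying only the final edge $E_n^{(k-1)}$ does \emph{not} preserve the single-group-toggle structure when $k\ge 3$. Compare the unmodified $E_n^{(k-2)}$ (parts $G_1,G_2$ in the $E_n^{(0)}$-columns of block $b$; parts $G_3,\ldots,G_k$ in the $E_1^{(0)}$-columns of block $b$) with your modified $E_n^{(k-1)}$ (part $G_1$ in block $b$; parts $G_2,\ldots,G_k$ in block $b{+}1$): this transition changes the block of every group $G_2,\ldots,G_k$ simultaneously, not just $G_2$. Consequently the window
\[
E_n^{(1)},\ E_n^{(2)},\ \ldots,\ E_n^{(k-2)},\ E_n^{(k-1)}[\text{mod}],\ E_1^{(0)}[b{+}1]
\]
has empty intersection: $G_1$ is lost at the last step, $G_k$ sits in block $b$ for the first $k-2$ edges and in block $b{+}1$ for the last two, and every intermediate $G_m$ already changes column inside the first $k-2$ edges. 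Your ``cyclic order of group transitions is unaffected'' claim is therefore false at the boundary.

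The remedy, which is exactly what the paper does, is to modify the last $k-1$ edges rather than just the last one: for each $\ell=1,\ldots,k-1$, redefine $E_n^{(\ell)}$ so that its $G_{k-\ell+1}\cup\cdots\cup G_k$-parts lie in block $b{+}1$ instead of block $b$. Then each successive edge migrates exactly one additional group across the boundary, the transition pattern $G_k,G_{k-1},\ldots,G_1$ is genuinely preserved, and your plateau argument goes through across the splice.
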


\begin{proof}
Let us consider the first $r$ vertices in $V_1,\ldots,V_ n$ as an $r \times n$ grid of vertices.  We construct $k$ perfect matchings $M_1,\ldots,M_k$ which we then use to construct a $k$-intersecting Hamiltonian cycles.  Recall that $2 \leq k \leq s$, and $k=2$ is equivalent to a sharp Hamiltonian cycle.

The first matching $M_1$ is equivalent to the matching $M$ in Lemma \ref{rdivq}, with edges labelled as $E_{1,1}, E_{1,2},\ldots,E_{1,n}$.

In the matching $M_2$, we take edges $E_{2,1}$ to $E_{2,n}$ so that edge $E_{2,i}$ has parts $a_1$ to $a_{k-1}$ as in edge $E_{1,i}$, while parts $a_k$ to $a_s$ are as in edge $E_{1,i+1}$.

In the matching $M_3$, we take edges $E_{3,1}$ to $E_{3,n}$ so that edge $E_{3,i}$ has parts $a_1$ to $a_{k-2}$ as in edge $E_{1,i}$, while parts $a_{k-1}$ to $a_s$ are as in edge $E_{1,i+1}$.

In general, in the matching $M_j$, we take edges $E_{j,1}$ to $E_{j,n}$ so that edge $E_{j,i}$ has parts $a_1$ to $a_{k-j+1}$ as in edge $E_{1,i}$, while parts $a_{k-j+2}$ to $a_s$ are as in edge $E_{1,i+1}$, for $2 \leq j \leq k$.  Since $n \geq s+1$, this is possible for all values of $k$ between $2$ and $s$.

Now we form a $k$-intersecting Hamiltonian cycle $C_1$ by taking the edges in the following order 
\medskip

\noindent $E_{1,1},E_{2,1},\ldots,E_{k,1},E_{1,2}\ldots,E_{k,2},\ldots,E_{1,i},E_{2,i},\ldots,E_{k,i},E_{1,i+1},\ldots,E_{k,i+1},\ldots$

\noindent $E_{1,n},\ldots,E_{k,n}$.
\medskip

We now look at the intersections:

$E_{1,1},E_{2,1},\ldots,E_{k,1}$ intersect in part $a_1$.

$E_{2,1},E_{3,1},\ldots,E_{1,2}$ intersect in parts $a_k$ to $a_s$.

$E_{3,1},E_{4,1},\ldots,E_{2,2}$ intersect in part $a_{k-1}$.

In general, $E_{j,i},E_{j+1,i},\ldots,E_{j-1,i+1}$ intersect in part $a_{k-j+2}$ for $3 \leq j \leq k$.  The last $k-1$ edges intersect $E_{1,1}$ in parts with the same intersections described above, making this cycle a $k$-intersecting Hamiltonian cycle.

Now if $q \geq r$ and $r|q$, then for the next $r$ vertices in $V_1$ to $V_n$ we can create another cycle $C_2$ in the same way.  To link $C_1$ and $C_2$, we must consider the last $k-1$ edges taken in $C_1$, that is edge $E_{2,n}$ to $E_{k,n}$.  For edge $E_{2,n}$,we take parts $a_k$ to $a_s$ to coincide with the same parts in the first edge in $C_2$, and in general, for edge $E_{j,n}$ we take parts $a_{k-j+2}$ to $a_s$ to coincide with the same parts in the first edge in $C_2$.

Hence if $q=pr$, we have a sharp Hamiltonian cycle $C=C_1 \cup C_2 \cup \ldots \cup C_p$, with the last $k-1$ edges of $C_p$ intersecting the first edge of $C_1$ in the  respective parts as described for $C_1$ intersecting $C_2$. 
\end{proof}
\medskip

Figure \ref{4int} shows the first two edges in the four perfect matchings required  for a $4$-intersecting Hamiltonian cycle when $\sigma=(a_1,a_2,\ldots,a_6)$, $q=r$ and $n=7$.  The boxes represent the parts $a_1$ to $a_6$ ordered from top to bottom --- $E_{j,1}$ is shaded in light grey while $E_{j,2}$ is shaded in dark grey, for $1 \leq j \leq 4$.   Each matching has seven distinct edges.

\begin{figure}[h!]
\centering
\includegraphics{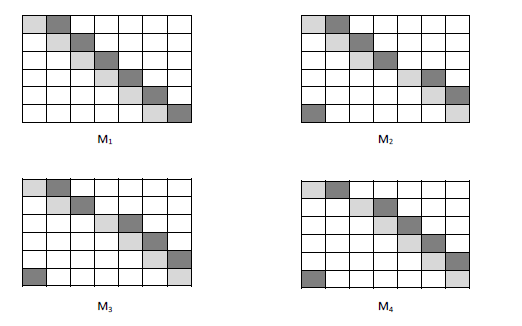} 
\caption{$\sigma= (a_1,a_2,\ldots,a_6)$ - matchings $M_1$ to $M_4$} \label{4int}
\end{figure}

\begin{lemma} \label{kint2}
Let $H=H(n,r,q \mid \sigma)$ with $\sigma=(a_1,a_2,\ldots,a_s)$, $s \geq 2$ and $\Delta=a_1 \geq a_2 \geq \ldots \geq a_s=\delta\geq1$.  Let $2 \leq k \leq s$.  If $(r+1) | q$ and $n \geq s+1$, then $H$ has a $k$-intersecting Hamiltonian cycle.
\end{lemma}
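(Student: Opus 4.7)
The plan is to lift the construction of Lemma \ref{kint1} to the $(r+1) \times n$ setting, mimicking the single-vertex swap that Lemma \ref{r+1divq} performed on top of Lemma \ref{rdivq}. I would work first on the top $r+1$ rows of the grid and build a $k$-intersecting cycle on $kn$ edges covering those $(r+1)n$ vertices; the full cycle is then obtained by concatenating disjoint copies of this construction along the $q$-direction.

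For the core construction, take matchings $M_1, \ldots, M_{k-1}$ exactly as in Lemma \ref{kint1}; they are perfect matchings of the top $r \times n$ sub-grid and leave the $(r+1)^{th}$ row entirely uncovered. I then modify $M_k$: each edge $E_{k,i}$ keeps its parts $a_2, \ldots, a_s$ as inherited from $E_{1,i+1}$, but its part $a_1$ in $V_i$ is replaced by the top $a_1 - 1$ vertices of $V_i$ together with the $(r+1)^{th}$ vertex of $V_i$. This swap is the direct analogue of the one performed on $M^*$ in Lemma \ref{r+1divq}, and it forces $M_k$ to pick up all the $(r+1)^{th}$-row leftovers, while each modified edge still has the correct $\sigma$-profile.

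Arrange the $kn$ edges in the cyclic order $E_{1,1}, E_{2,1}, \ldots, E_{k,1}, E_{1,2}, \ldots, E_{k,n}$ used in Lemma \ref{kint1}. The $k$-intersecting property must then be re-verified. The ``rotational'' intersections $E_{j,i} \cap \cdots \cap E_{j-1,i+1}$ for $2 \leq j \leq k$ are all realised in classes $V_{i'}$ with $i' \neq i$ and in parts other than $a_1$, so they survive the modification unchanged. The only intersection that is affected is $E_{1,i} \cap E_{2,i} \cap \cdots \cap E_{k,i}$, which shrinks from the entire part $a_1$ in $V_i$ to its top $a_1 - 1$ vertices; this remains nonempty provided $a_1 \geq 2$. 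Because the $(r+1)^{th}$ vertex of $V_i$ now sits exclusively in $E_{k,i}$, the swap cannot create any new $k$-wise intersection.

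Finally, when $q = y(r+1)$, I tile the $q \times n$ grid into $y$ consecutive $(r+1) \times n$ strips, build a cycle $C_l$ on each, and splice them by the device used in Lemma \ref{kint1}: the trailing $k-1$ edges of $C_l$ have their ``upper'' parts $a_{k-j+2}, \ldots, a_s$ redirected to coincide with the corresponding parts of the first edge of $C_{l+1}$ (indices modulo $y$), producing the required $k$ consecutive intersections across the boundary. The main obstacles I foresee are, first, the careful bookkeeping needed to verify that no non-consecutive $k$-wise intersection is accidentally introduced either by the row-$(r+1)$ swap or by the cycle-splicing step, and, second, the degenerate case $a_1 = 1$ (so $\sigma = (1, 1, \ldots, 1)$), where the residual intersection of size $a_1 - 1 = 0$ vanishes and one would have to designate a different part or a different matching to absorb the extra row.
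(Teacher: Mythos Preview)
Your approach is essentially the paper's own: build the $k$ matchings of Lemma~\ref{kint1} on the top $r$ rows and absorb row $r+1$ by swapping the bottom vertex of the $a_1$-part for the $(r{+}1)^{\text{th}}$ vertex, exactly paralleling how Lemma~\ref{r+1divq} modified Lemma~\ref{rdivq}. The only cosmetic difference is \emph{where} the swap is applied: the paper performs it in every matching $M_2,\ldots,M_k$ (so $E_{2,i},\ldots,E_{k,i}$ all carry the extra vertex), whereas you perform it only in $M_k$. Either choice yields the same list of $k$-wise intersections, and your verification that the ``rotational'' intersections live outside $V_i$ and outside part $a_1$ is exactly the right check.

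Your flagged obstacle is genuine and worth noting: when $a_1=1$ (i.e.\ $\sigma=(1,\ldots,1)$) the intersection $E_{1,i}\cap\cdots\cap E_{k,i}$ collapses to $a_1-1=0$ vertices, so the cycle fails to be $k$-intersecting at those positions. The paper's proof has precisely the same defect and does not address it, so you have not missed anything relative to the reference argument; you have in fact identified a case that needs a separate patch (e.g.\ performing the swap in a part other than $a_1$, or along a different matching, as you suggest).
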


\begin{proof}
Let us consider the first $r+1$ vertices in $V_1,\ldots,V_ n$ as an $(r+1) \times n$ grid of vertices.  We construct $k$ matchings $M_1,\ldots,M_k$ which we then use to construct a $k$-intersecting Hamiltonian cycle, using a method similar to that used in the previous lemma.  

The first matching $M_1$ is for the top $r$ vertices in $V_1$ to $V_n$, and is equivalent to the matching $M$ in Lemma \ref{kint1}, with edges labelled as $E_{1,1}, E_{1,2},\ldots,E_{1,n}$.

In the matching $M_2$ for the $(r+1) \times n$ grid , we take edges $E_{2,1}$ to $E_{2,n}$ so that edge $E_{2,i}$ has part $a_1$ as in $E_{1,i}$, but replacing the last vertex in this part with the $(r+1)^{th}$ vertex in the same class, parts $a_2$ to $a_{k-1}$ as in edge $E_{1,i}$, while parts $a_k$ to $a_s$ are as in edge $E_{1,i+1}$.
 
In the matching $M_3$, we take edges $E_{3,1}$ to $E_{3,n}$ so that edge $E_{3,i}$ has part $a_1$ as in edge $E_{2,i}$, parts $a_2$ to $a_{k-2}$ as in edge $E_{1,i}$, while parts $a_{k-1}$ to $a_s$ are as in edge $E_{1,i+1}$.

In general, in the matching $M_j$, we take edges $E_{j,1}$ to $E_{j,n}$ so that edge $E_{j,i}$ has parts $a_1$ as in edge $E_{2,i}$, parts $a_2$ to $a_{k-j+1}$ as in edge $E_{1,i}$, while parts $a_{k-j+2}$ to $a_s$ are as in edge $E_{1,i+1}$, for $3 \leq j \leq k$.

Now we form a $k$-intersecting Hamiltonian cycle $C_1$ by taking the edges in the following order:

\noindent $E_{1,1},E_{2,1},\ldots,E_{k,1},E_{1,2}\ldots,E_{k,2},\ldots,E_{1,i},E_{2,i},\ldots,E_{k,i},E_{1,i+1},\ldots,E_{k,i+1},\ldots$

\noindent $E_{1,n},\ldots,E_{k,n}$.
\medskip

We now look at the intersections:

$E_{1,1},E_{2,1},\ldots,E_{k,1}$ intersect in $a_1-1$ vertices in part $a_1$.

$E_{2,1},E_{3,1},\ldots,E_{1,2}$ intersect in parts $a_k$ to $a_s$.

$E_{3,1},E_{4,1},\ldots,E_{2,2}$ intersect in part $a_{k-1}$.
\medskip

In general, $E_{j,i},E_{j+1,i},\ldots,E_{j-1,i+1}$ intersect in part $a_{k-j+2}$ for $3 \leq j \leq k$.  The last $k-1$ edges intersect $E_{1,1}$ in parts with the same intersections described above,  making this cycle a $k$-intersecting Hamiltonian cycle.

Now if $q \geq r+1$ and $(r+1)|q$, then for the next $r+1$ vertices in $V_1$ to $V_n$ we can create another cycle $C_2$ in the same way.  To link $C_1$ and $C_2$, we must consider the last $k-1$ edges taken in $C_1$, that is edge $E_{2,n}$ to $E_{k,n}$.  For edge $E_{2,n}$,we take parts $a_k$ to $a_s$ to coincide with the same parts in the first edge in $C_2$, and in general, for edge $E_{j,n}$ we take parts $a_{k-j+2}$ to $a_s$ to coincide with the same parts in he first edge in $C_2$.

Hence if $q=p(r+1)$, we have a sharp Hamiltonian cycle $C=C_1 \cup C_2 \cup \ldots \cup C_p$, with the last $k-1$ edges of $C_p$ intersecting the first edge of $C_1$ in the  respective parts as described for $c_1$ intersecting $C_2$. 
\end{proof}

We can now prove a generalised form of Theorem \ref{SHC}:

\begin{theorem}
Let $H=H(n,r,q \mid \sigma)$, where $s(\sigma) \geq 2$.  For $2 \leq k \leq s$,  if $q\geq r(r-1)$ and $n\geq s+1$, then $H$ has a $k$-intersecting  Hamiltonian cycle.
\end{theorem}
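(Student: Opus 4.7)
The proof mirrors Theorem~\ref{SHC} and bootstraps on the two preceding lemmas, so I would lay it out in the following stages.

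First, apply the Frobenius Theorem~\ref{Frobenius} to the coprime pair $(r,r+1)$. Since $q \geq r(r-1) = (r-1)\bigl((r+1)-1\bigr)$, there exist nonnegative integers $x$ and $y$ with $q = xr + y(r+1)$. Partition the $q \times n$ vertex grid horizontally into $x$ consecutive $r \times n$ subgrids followed by $y$ consecutive $(r+1) \times n$ subgrids. If either $x=0$ or $y=0$ the result follows immediately from Lemma~\ref{kint2} or Lemma~\ref{kint1} respectively, so assume both are positive.

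Next, fix an integer $2 \leq k \leq s$ and a splitting index so that the ``diagonal'' construction from Lemmas~\ref{kint1} and \ref{kint2} can be applied consistently: build a $k$-intersecting Hamiltonian cycle $C_1^{(i)}$ on each $r \times n$ subgrid using Lemma~\ref{kint1}, and a $k$-intersecting Hamiltonian cycle $C_2^{(j)}$ on each $(r+1) \times n$ subgrid using Lemma~\ref{kint2}. In each sub-cycle the edges are grouped into $n$ blocks of $k$ consecutive edges $E_{1,i},\ldots,E_{k,i}$, and the recipes in those lemmas already describe, for every $i$, in exactly which part each consecutive $k$-tuple of edges intersects.

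The key task is the linking stage. Concatenate the sub-cycles $C_1^{(1)},\ldots,C_1^{(x)},C_2^{(1)},\ldots,C_2^{(y)}$ in order, and wherever two consecutive sub-cycles are joined, suppress the internal wrap-around and instead route the last $k-1$ edges of one sub-cycle into the first edge of the next. Following the prescriptions already used at the close of Lemmas~\ref{kint1} and \ref{kint2}, choose the parts $a_{k-j+2},\ldots,a_s$ of the $j$-th-from-last edge (for $j=2,\ldots,k$) to coincide with the corresponding parts of the first edge of the next sub-cycle; for the very last sub-cycle $C_2^{(y)}$, do the same against the first edge of $C_1^{(1)}$ to close the whole cycle. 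Because all sub-cycles use the same column ordering on $V_1,\ldots,V_n$ and the same splitting index inside $\sigma$, the ``diagonal shift'' that governs each individual sub-cycle is preserved across the seams, so each freshly created $k$-tuple of consecutive edges straddling a seam has the same intersection pattern as an internal $k$-tuple.

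The main obstacle, and the step that deserves the most care, is precisely this seam analysis: one must check not only that every consecutive $k$-tuple of edges (including those crossing two sub-cycles) has nonempty intersection, but also that no collection of $k$ or more edges that is not a consecutive window has a common vertex. Nonconsecutive edges drawn from different sub-grids are automatically vertex-disjoint because they live in disjoint horizontal strips of the grid; nonconsecutive edges inside a single sub-cycle are already controlled by Lemmas~\ref{kint1} and \ref{kint2}; and the ``glue'' between sub-cycles only reuses the parts $a_{k-j+2},\ldots,a_s$ in the prescribed way, so no spurious $k$-wise intersection is introduced. Combining these three observations with the local $k$-intersecting property of each sub-cycle yields a $k$-intersecting Hamiltonian cycle in $H$, as required.
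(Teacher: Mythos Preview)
Your proposal is correct and follows essentially the same route as the paper: apply Frobenius to write $q=xr+y(r+1)$, invoke Lemmas~\ref{kint1} and \ref{kint2} on the resulting horizontal strips, and link the pieces by redirecting the parts $a_{k-j+2},\ldots,a_s$ of the last $k-1$ edges across each seam. The only cosmetic difference is that the paper first aggregates the $x$ strips into a single cycle $C_1$ and the $y$ strips into a single cycle $C_2$ (since the lemmas already cover $r\mid q$ and $(r+1)\mid q$, not just $q=r$ and $q=r+1$) and then performs one gluing, whereas you keep $x+y$ sub-cycles and glue at every seam; your extra paragraph verifying that no spurious $k$-wise intersection is created at the seams is a detail the paper leaves implicit.
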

 \begin{proof}
By Theorem \ref{Frobenius}, we know that if $q \geq  r(r-1)$, there exist nonnegative integers $x$ and $y$ such that  $x r +y(r+1)=q$, since $r$ and $r+1$ are always coprime.  So let us divide the $q \times n$ grid into $x$ consecutive grids of size $r \times n$, followed by $y$ consecutive grids of size $(r+1) \times n$.  If we consider the $xr \times n$ grid first, we know that by Lemma \ref{kint1}, there is a $k$-intersecting Hamiltonian cycle $C_1$ covering these vertices, and by Lemma \ref{kint2}, there is a $k$-intersecting Hamiltonian cycle $C_2$ covering the $y(r+1) \times n$ grid.  If $x=0$ or $y=0$, then $C_1$, respectively $C_2$ give the required $k$-intersecting Hamiltonian cycle. So we may assume that both $x$ and $y$ are greater than $0$.  We now need to look at linking $C_1$ to $C_2$ and viceversa.  Firstly, instead of linking the last $k-1$ edges  in $C_1$ with the first one, we link them to the first edge in $C_2$, as follows:  for edge $E_{2,n}$,we take parts $a_k$ to $a_s$ to coincide with the same parts in the first edge in $C_2$, and in general, for edge $E_{j,n}$ we take parts $a_{k-j+2}$ to $a_s$ to coincide with the same parts in the first edge in $C_2$.

The last $k-1$ edges of $C_2$,  must be linked to the first edge in $C_1$.  So we take the respective parts in these edges to coincide with these parts in the first edge in $C_1$, in the same way we linked the last $k-1$ edges in $C_1$ to the first edge in $C_2$.  Thus $C_1 \cup C_2$ form a sharp Hamiltonian cycle in $H$.
\end{proof}

\section{Conclusion}

The paper \cite{CaroLauri14} defined $\sigma$-hypergraphs and started their study in order to investigate what are known as mixed colourings or Voloshin colouring \cite{voloshin02} of hypergraphs.  In the colourings in \cite{CaroLauri14}, no edge was allowed to have all vertices having the same colour, or all vertices having different colours.  This study was continued in  \cite{CLz1}.  These papers demonstrated the versatility of $\sigma$-hypergraphs in obtaining interesting results on mixed colourings.  In \cite{CLZ3},  the study of $\sigma$-hypergraphs was extended to two other classical areas of graph and hypergraph theory:  matchings and independence.  In this paper we continue in this vein, showing that $\sigma$-hypergraphs can also give elegant results on Hamiltonicity.

\bibliographystyle{plain}
\bibliography{z1bibv1}

\end{document}